\newtheorem{lemma}{Lemma}[section]
\newtheorem{theorem}{Theorem}[section]
\newtheorem{corollary}{Corollary}[section]
\def\bmath#1{\mbox{\boldmath $#1$}}
\newcommand{\eset}[1]{\left\lbrace #1 \right\rbrace}
\DeclareMathOperator*{\argmin}{arg\,min}
\newcommand{\getheight}{\rule{0em}{2.5ex}}
\def\0{\bmath{0}}
\def\1{\bmath{1}}
\title{Bounds for the number of basic feasible solutions generated by the simplex method with the largest distance rule}
\author{Tomonari Kitahara\footnote{Faculty of Economics, Kyushu University, Fukuoka 819-0395, Japan. E-mail: tomonari.kitahara@econ.kyushu-u.ac.jp}}
\date{\today}
\begin{document}

\maketitle
\abstract{
In this paper, we analyze the simplex method with the largest distance rule and derive upper bounds on the number of different basic feasible solutions generated. The pivoting rule was proposed by Pan \cite{pan2008}, and in some cases, it was reported to be more efficient than the renowned steepest edge rule. We show that the analytical framework developed by Kitahara and Mizuno can be extended to this rule, despite its structural differences from previously studied pivoting rules. The resulting bounds involve a geometric parameter $\beta$ determined by the column norms of the constraint matrix. In addition, our analysis does not require a nondegeneracy assumption.
}
\section{Introduction}
\subsection{Background}
The simplex method is one of the most efficient algorithms for solving linear programming problems (LPs). 
Its efficiency is greatly affected by a pivoting rule. 
There are some important pivoting rules from a practical and theoretical viewpoint. 
Dantzig's rule (or the most negative coefficient rule) \cite{dantzig1963} is the oldest and the best-known pivoting rule. 
When the problem in consideration is degenerate, the simplex method might encounter a phenomenon known as cycling, in which the algorithm visits the same set of bases infinitely.
Bland's rule \cite{bland1977} is the first countermeasure for cycling. 
Similarly, the shadow pivoting rule is also important from a theoretical standpoint.
Borgwardt \cite{borgwardt1987} showed that for random LPs, the expected value of the number of iterations is strongly polynomial in 
the number of facets of the feasible region and the number of variables. 
Spielman and Teng \cite{spielman2004} introduced smoothed analysis and showed that the simplex method runs in polynomial time under random perturbations.
From a practical view point, it has been widely observed that the steepest edge rule shows superior performance to other pivoting rules \cite{matousek2007}.  
Pan \cite{pan2008} proposed a pivoting rule called the largest distance rule.
Geometrically speaking, this pivoting rule chooses, as an entering variable, a nonbasic variable whose corresponding dual constraint has the largest unit violation. 
Through his computational experiments, Pan \cite{pan2010} showed that a variant of the largest distance rule shows better performance than the steepest edge rule.\\

To assess deterministic complexity of the simplex method, Kitahara and Mizuno ~\cite{km2013} analyzed 
the number of basic feasible solutions (BFSs) generated by the simplex method.
They showed upper bounds for the number of different BFSs generated by the simplex method with Dantzig's rule, namely
\begin{equation} \notag
  \left\lceil \frac{m\gamma}{\delta} \log\left( \frac{\getheight c^{\top}x^0-z^*}{\getheight c^{\top}\bar{x}-z^*} \right) \right\rceil \quad \mathrm{and}~
 (n-m) \left\lceil \frac{m\gamma}{\delta} \log\left(\frac{m\gamma}{\delta}\right) \right\rceil,
\end{equation}
where $m$ is the number of constraints, $n$ is the number of variables, $c$~is the coefficient vector of the objective function, $x^0$~is an initial BFS, $\bar{x}$~is a BFS with the second smallest objective value, $z^*$~is the optimal value, and $\gamma$ and~$\delta$ are the maximum and minimum positive elements, respectively, in all BFSs
\footnote{In the original paper, the latter bound is $n \left\lceil \frac{m\gamma}{\delta} \log\left(\frac{m\gamma}{\delta}\right) \right\rceil$. 
But this can be slightly improved to $ (n-m) \left\lceil \frac{m\gamma}{\delta} \log\left(\frac{m\gamma}{\delta}\right) \right\rceil$ by a trivial observation. 
See the proof of Theorem  \ref{thm:Upper-Bound-Without-Objective-Function-Ratio}.}.
Assuming nondegenerate LPs, these bounds become upper bounds for the number of iterations of the simplex methods with Dantzig's rule.
Inspired by these results, Tano et al. \cite{tano2019} showed bounds for the number of iterations of the simplex method with the steepest edge rule 
when it is applied to a nondegenerate LP.
The bounds are 
\[
\left\lceil m^{\frac{3}{2}} \frac{\gamma^2}{\delta^2} \log\left( \frac{\getheight c^{\top}x^0-z^*}{\getheight c^{\top}\bar{x}-z^*} \right) \right\rceil~\mathrm{and}~
(n-m) \left\lceil m^{\frac{3}{2}} \frac{\gamma^2}{\delta^2} \log\left(\frac{m\gamma}{\delta}\right) \right\rceil.
\]  
However, it is not clear to what extent this analytical framework can be applied to other pivoting rules, especially those whose selection criteria depend on the original constraint matrix rather than the transformed nonbasic matrix.
\subsection{Main contributions}
In this paper, we show that the analytical framework developed by Kitahara and Mizuno can be extended to the largest distance rule.
Although this rule has a simple form, its structure differs from previously analyzed pivoting rules, since it depends on the norm of the original constraint matrix rather than the transformed nonbasic matrix. We show that this difference leads to complexity bounds involving a geometric parameter $\beta$, defined by the ratio of column norms of the constraint matrix.
As a result, we derive upper bounds for the number of different basic feasible solutions generated by the simplex method with the largest distance rule. In addition, unlike previous analyses for the steepest edge rule, our results do not require a nondegeneracy assumption.
\subsection{Notations and the structure of the paper}
We use $0$ both for a number and a zero vector. 
We believe the distinction is clear from context.
For a vector $v$, $v_i$ means the $i$ th element of $v$. 
For a vector or a matrix $v$, $v^{\top}$ represents the transpose of $v$. 
For vectors $v,w\in \mathbb{R}^n$, $v=w$ means $v_i=w_i$ for all $i\in\{1,2\dots,n\}$. 
Also, $v\ge w$ means $v_i\ge w_i$ for all $i\in\{1,2\dots,n\}$. 
For a real number $r$, $\lceil r\rceil$ is the minimum integer greater than $r$.\\ 

This paper is organized as follows. 
In Section 2, we give preliminaries for the analysis in this paper. 
We also explain existing research related to the topic of this paper.  
In Section 3, we analyze the simplex method with the largest distance rule and obtain the above two bounds for the number of different BFSs. 
Finally in Section 4, we conclude the paper.
\section{
Preliminaries and existing research
}
\subsection{Linear programming and the simplex method} \label{sec:LP}
Let $m<n$ be positive integers. We consider an LP with $n$~nonnegative variables and $m$~constraints:
\begin{equation} \label{primal}
\begin{array}{ll}
\min&c^{\top}x,\\
\text{subject to}&Ax=b,\\
&x\ge0,
\end{array}
\end{equation}
where $A \in \mathbb{R}^{m \times n}$, $b \in \mathbb{R}^m$, and $c \in \mathbb{R}^n$ are given data and $x \in \mathbb{R}^n$ is a variable vector.
In this paper, we assume $\mathrm{rank}(A)=m$.  
We will summarize assumptions of this paper in Section \ref{assumptions}. 
The dual of problem~\eqref{primal} is expressed as
\begin{equation} \label{dual}
\begin{array}{ll}
\max&b^{\top}y,\\
\text{subject to}&A^{\top}y+s=c,\\
&s\ge0,
\end{array}
\end{equation}
where $y \in \mathbb{R}^m$ and $s \in \mathbb{R}^n$ are variable vectors.\\

Next, we define a dictionary. 
Let $B\subset\{1,2\dots,n\}$ and $N=\{1,2,\dots,n\}\setminus B$.
According to $B$ and $N$, we split $A$, $c$, and~$x$ as follows:
\[
A= \begin{bmatrix}
    A_B & A_N
  \end{bmatrix},~
  c = \begin{bmatrix}
    c_B\\
    c_N
  \end{bmatrix},~
  x = \begin{bmatrix}
    x_B\\
    x_N
  \end{bmatrix}.
\]
By using these splits, problem~\eqref{primal} is written as
\begin{equation} \label{primal_r}
\begin{array}{ll}
\min&c_B^{\top}x_B+c_N^{\top}x_N,\\
\text{subject to}&A_Bx_B+A_Nx_N=b,\\
&x=(x_B,x_N)\ge0.
\end{array}
\end{equation}

When $|B| = m$ and $A_B$ is nonsingular, $B$ and $N$ are called a basis and a nonbasis, respectively. 
In this case, problem~\eqref{primal_r} is transformed by multiplying the equality constraint by~$A_B^{-1}$ from the left
and rearranging appropriately:
\begin{equation} \label{primal_dictionary}
\begin{array}{ll}
\min& c_B^{\top}A_B^{-1}b+\bar{c}_N^{\top}x_N,\\
\text{subject~to}&x_B=A_B^{-1}b-\bar{A}_Nx_N,\\
&x=(x_B,x_N)\ge0,
\end{array}
\end{equation}
where $\bar{c}_N = c_N - A_N^{\top} (A_B^{\top})^{-1}c_B$ and $\bar{A}_N = A_B^{-1} A_N$.
Formulation~\eqref{primal_dictionary} is called a dictionary for a basis~$B$.
The vector $\bar{c}_N$ is called a reduced cost vector and $\bar{A}_N$ is called a nonbasic matrix.
A basic solution is a solution $x$ such that $(x_B,x_N) = (A_B^{-1}b, 0)$ in dictionary (\ref{primal_dictionary}).
The elements of $x_B$ are basic variables and those of $x_N$ are nonbasic variables.
A basic solution~$x$ is called a basic feasible solution (BFS) if $x_B\ge0$.
A basis~$B$ and a nonbasis~$N$ are called a feasible basis and a feasible nonbasis, respectively, if the corresponding basic solution is feasible.\\

For a given BFS in dictionary~\eqref{primal_dictionary}, if $\bar{c}_N \ge 0$, then the current BFS is optimal. 
Let us assume otherwise. 
In this case, we first choose a nonbasic variable $x_j,~j\in N,$ whose reduced cost is negative. 
By increasing the value of $x_j$ from 0, we can reduce the objective function. 
If we can increase the value of $x_j$ infinitely without violating the constraints, then we declare that the problem (\ref{primal}) is unbounded and stop the algorithm. 
Otherwise, by increasing the value of $x_j$, the value of some basic variable $x_i$ becomes zero at some point. 
Then we exchange the roles of $x_i$ and $x_j$ to obtain a new dictionary.
The overall procedure is called a pivot.
In this example, $x_j$ is called an entering variable and $x_i$ is called a leaving variable.

\subsection{Pivoting rules} \label{Sec:pivoting-rules}

A rule for choosing an entering variable from nonbasic variables with negative reduced costs is called a pivoting rule.
Many pivoting rules have been proposed and they greatly affect the efficiency of the simplex method.
In this subsection, we describe Dantzig's rule (or the most negative coefficient rule), the steepest-edge rule and the largest distance rule.\\

{\bf Dantzig's rule \cite{dantzig1963}}: Dantzig's rule chooses a nonbasic variable with the smallest reduced cost as an entering variable.
For given feasible dictionary~\eqref{primal_dictionary}, this rule chooses an entering variable $x_{j_d}$ such that
\[
j_d \in \argmin\{\bar{c}_j|~j\in N\}
\]
\\

{\bf Steepest edge rule \cite{goldfarb1977, forrest1992}}: The steepest-edge rule focuses on the difference vector of basic solutions and the decrease in the objective value. 
Formally, it chooses a nonbasic variable $x_{j_s}$ which satisfies
\[
j_s\in\argmin\{\frac{\bar{c}_j}{\sqrt{1+\|\bar{a}_j\|^2}}|~j\in N\},
\]
where $\bar{a}_j$ represents $j$-th column vector of the nonbasic matrix $\bar{A}_N$. 
It is easy to see the fraction in the bracket is the reduction of the objective function divided by the norm of the displacement vector, when we choose $x_j$ as an entering variable. 
\\

{\bf largest distance rule \cite{pan2008}}: Under the largest distance rule, we choose a nonbasic variable $x_{j_m}$ satisfying
\begin{equation}
j_m \in \argmin\{\frac{\bar{c}_j}{\|a_j\|}|~j\in N\}, \label{largest}
\end{equation}
where $a_j$ represents the $j$-th column vector of the constraint matrix $A$. 
Geometrically speaking, this rule commands to choose a nonbasic variable whose corresponding dual constraint has the largest unit violation. 
Note that the steepest edge rule and the largest distance rule take a similar form. 
While $\bar{a}_j$ in the steepest rule varies iteration to iteration and computation of $\bar{a}_j$ is costly, $a_j$ in the largest distance rule is constant. 
Pan \cite{pan2010} reported a variant of the largest distance rule shows superior performance over the steepest edge rule in his computational experiments.
\subsection{Parameters}\label{parameters}
In this subsection, we explain parameters needed for our analysis. \\

First, let $\delta$ and $\gamma$ be  the minimum and maximum positive elements, respectively, in all BFSs. 
From the definition, for a BFS $(x_B,x_N)$ and $j \in B$ such that $x_j\not=0$, we have
\[
\delta\le x_j\le \gamma.
\]
We also need another parameter $\beta$, which is defined as 
\begin{equation}
\beta=\frac{\min_{j\in \{1,2,\dots_n\}}\|a_j\|}{\max_{j\in \{1,2,\dots_n\}}\|a_j\|}, \label{beta}
\end{equation}
where $a_j$ denote the $j$ th column vector of $A$.  
Note that $\beta$ can be directly computed from $A$. 
Note also that $\beta$ can be bounded from below as 
\[
\beta\ge \frac{\min_{i,j\in \{1,2,\dots_n\}}|a_{ij}|}{\sqrt{n}\max_{i,j\in \{1,2,\dots_n\}}|a_{ij}|},
\]
where $a_{ij}$ means the $(i,j)$ element of $A$.
\subsection{Assumptions}\label{assumptions}
Throughout this paper, we pose the following three assumptions.
\begin{itemize}
  \item[(i)] $\text{rank~} A = m$;
  \item[(ii)] problems~\eqref{primal} and~\eqref{dual} have optimal basic solutions denoted by $x^*$ and $(y^*, s^*)$, respectively, and these optimal values are~$z^*$;
  \item[(iii)] an initial BFS~$x^0$ is available and it is not optimal, that is, the objective value of~$x^{0}$ is larger than~$z^*$.
\end{itemize}
Concerning (ii), we use $\bar{x}$ to represent a BFS with the second smallest objective value.
These assumptions are the same as those made in the previous study~\cite{km2013}; 
\subsection{Previous research}
To consider the complexity of the simplex method, some recent studies analyzed the number of different BFSs generated by the simplex method.
Kitahara and Mizuno~\cite{km2013} proved that an upper bound for the number of different BFSs generated by the simplex method with Dantzig's rule is
\begin{equation} \label{eq:bound-by-Kitahara-1}
  \left\lceil \frac{m\gamma}{\delta} \log\left( \frac{c^{\top} x^0 - z^*}{c^{\top} \bar{x} - z^*} \right) \right\rceil
\end{equation}
for a standard LP with $n$~variables and $m$~constraints (see Section~\ref{parameters} for the notation used in \eqref{eq:bound-by-Kitahara-1} and~\eqref{eq:bound-by-Kitahara-2}.)
Another upper bound that is independent of the objective value was also shown in~\cite{km2013}:
\begin{equation} \label{eq:bound-by-Kitahara-2}
  (n-m) \left\lceil \frac{m\gamma}{\delta} \log\left( \frac{m\gamma}{\delta} \right) \right\rceil.
\end{equation}
For a nondegenerate LP, upper bounds \eqref{eq:bound-by-Kitahara-1} and~\eqref{eq:bound-by-Kitahara-2} can be regarded as upper bounds for the number of iterations of the simplex method with Dantzig's rule.
The same authors \cite{km2011} observed that for a variant of Klee-Minty problem, the bound \eqref{eq:bound-by-Kitahara-2} is close to the actual number of iterations.\\

Kitahara and Mizuno~\cite{km2013a} also studied pivoting rules that do not increase the objective value in each iteration.
Using such a pivoting rule, the simplex method generates at most
\begin{equation} \label{eq:bound-by-Kitahara-3}
  \left\lceil \min\eset{m,n-m} \frac{\gamma \gamma^{\prime}}{\delta \delta^{\prime}} \right\rceil
\end{equation}
different BFSs, where $\gamma^{\prime}$ and~$\delta^{\prime}$ are the maximum and minimum absolute values, respectively, of negative reduced costs in all BFSs.
Assuming nondegenerate LPs, upper bound~\eqref{eq:bound-by-Kitahara-3} for the number of different BFSs can be regarded as an upper bound for the number of simplex iterations.\\

Inspired by these results, Tano et al. \cite{tano2019} showed bounds for the number of iterations for the simplex method with the steepest edge rule, with an additional assumption that the problem is nondegenerate.
They obtained the bounds
\[
\left\lceil m^{\frac{3}{2}} \frac{\gamma^2}{\delta^2} \log\left( \frac{\getheight c^{\top}x^0-z^*}{\getheight c^{\top}\bar{x}-z^*} \right) \right\rceil~\mathrm{and}~
(n-m) \left\lceil m^{\frac{3}{2}} \frac{\gamma^2}{\delta^2} \log\left(\frac{m\gamma}{\delta}\right) \right\rceil.\]  
\section{Analysis of the largest distance rule}
\subsection{Technical results}
We begin this section by a lemma, which  gives a lower bound for the optimal value of an LP.
\begin{lemma}[Kitahara and Mizuno~\cite{km2013}] \label{lem:Lower-Bound-Of-Optimal-Value}
Let $x^t$ be the $t$-th solution generated by the simplex method with any pivoting rule, and let $B^t$ and~$N^t$ be the basis and nonbasis, respectively, corresponding to~$x^t$.
Set $\Delta_{d,t} = -\min\{\bar{c}_j|~j\in N^t\}$.
Then, we have
\begin{equation} \label{eq:Lower-Bound-Of-Optimal-Value}
  z^* \ge c^{\top} x^t - m \gamma \Delta_{d,t}
\end{equation}
\end{lemma}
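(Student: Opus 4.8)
The plan is to express the objective through the reduced costs attached to the basis $B^t$ and then to plug in the optimal BFS $x^*$, controlling the resulting error term by the elementary structure of a basic feasible solution. Concretely, I would first recall the identity underlying the dictionary \eqref{primal_dictionary}: solving the equality constraint for the basic part, $x_{B^t} = A_{B^t}^{-1}b - \bar A_{N^t}x_{N^t}$, and substituting into $c^\top x = c_{B^t}^\top x_{B^t} + c_{N^t}^\top x_{N^t}$ gives
\[
  c^\top x = c_{B^t}^\top A_{B^t}^{-1}b + \bar c_{N^t}^\top x_{N^t}
\]
for \emph{every} $x$ satisfying $Ax=b$; the sign constraints $x\ge 0$ play no role here, so the identity holds in particular at points that are basic for some other basis. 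Since $x^t_{N^t}=0$, the constant term equals $c^\top x^t$, and therefore $c^\top x = c^\top x^t + \bar c_{N^t}^\top x_{N^t}$ for all such $x$.

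Next, I would apply this with $x=x^*$. Using $c^\top x^*=z^*$, $x^*\ge 0$, and $\bar c_j \ge -\Delta_{d,t}$ for every $j\in N^t$ (which is exactly the definition of $\Delta_{d,t}$),
\[
  z^* = c^\top x^t + \sum_{j\in N^t}\bar c_j\, x^*_j \;\ge\; c^\top x^t - \Delta_{d,t}\sum_{j\in N^t} x^*_j \;\ge\; c^\top x^t - m\gamma\,\Delta_{d,t},
\]
where the last step uses $\Delta_{d,t}\ge 0$ (the case in which the lemma is actually invoked) together with the fact that $x^*$, being a BFS, has at most $m$ positive coordinates, each at most $\gamma$, so $\sum_{j\in N^t} x^*_j \le m\gamma$. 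This is precisely \eqref{eq:Lower-Bound-Of-Optimal-Value}.

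I do not anticipate a genuine obstacle; the argument is short. The two points that deserve a little care are (i) that the reduced-cost identity is a consequence of $Ax=b$ alone, which is what legitimizes substituting $x^*$ even though $x^*$ is generally not the basic solution associated with $B^t$, and (ii) the counting fact that a BFS has at most $m$ nonzero entries, which is responsible for the factor $m\gamma$ (rather than $n\gamma$) in the bound. If one wishes to avoid any assumption on the sign of $\Delta_{d,t}$, it suffices to observe that the lemma is only used while the simplex method is still iterating, where some $\bar c_j<0$ and hence $\Delta_{d,t}>0$.
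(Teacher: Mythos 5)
Your proof is correct. The paper itself does not prove this lemma---it is imported verbatim from Kitahara and Mizuno~\cite{km2013}---but your argument (the identity $c^{\top}x = c^{\top}x^t + \bar{c}_{N^t}^{\top}x_{N^t}$ for every $x$ with $Ax=b$, evaluated at $x^*$, with $\sum_{j\in N^t}x_j^*\le m\gamma$ because a BFS has at most $m$ positive entries each bounded by $\gamma$) is essentially the one given in that reference, and your side remarks about why substituting $x^*$ is legitimate and why $\Delta_{d,t}\ge 0$ in every situation where the lemma is invoked are exactly the right points to flag.
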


Assume that now we are at $t$-th iteration of the simplex method with basis $B^t$ and nonbasis $N^t$. 
Let $x_{j_{d,t}}$ and $x_{j_{l,t}}$ be nonbasic variables chosen by Dantzig's rule and the largest distance rule, respectively. 
We also set $\Delta_{l,t}=\bar{c}_{j_{l,t}}$.
Then from (\ref{largest}), we have
\[
\frac{\bar{c}_{j_{l,t}}}{\|a_{j_{l,t}\|}}\le \frac{\bar{c}_{j_{d,t}}}{\|a_{j_{d,t}\|}}
\]
from rearranging this inequality and the definition of $\beta$ (\ref{beta}), we obtain
\begin{equation}
\Delta_{l,t}=\bar{c}_{j_{l,t}}\le \frac{\|a_{j_{l,t}}\|}{\|a_{j_{d,t}}\|}\bar{c}_{j_{d,t}}\le \beta \Delta_{d,t}.
\label{estimate}
\end{equation} 

Next, we show that in each iteration of the simplex method with the largest distance rule, the difference between the objective value and the optimal value decreases at a constant ratio or more.

\begin{lemma} \label{lm:gap_rate}
Let $x^t$ and $x^{t+1}$ be the $t$-th and $(t+1)$-th solutions, respectively, of the simplex method with largest distance rule.
If $x^t\not=x^{t+1}$, we have the following inequality:
\begin{equation} \label{eq:gap_rate}
  c^{\top} x^{t+1} - z^* \le \left( 1 - \frac{\beta \delta}{m \gamma} \right) \left( c^{\top} x^{t} - z^* \right).
\end{equation}
\end{lemma}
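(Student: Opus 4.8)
The plan is to mirror Kitahara and Mizuno's argument for Dantzig's rule, inserting the comparison estimate \eqref{estimate} at the crucial moment. Write $j=j_{l,t}$ for the variable entering at iteration $t$ under the largest distance rule, and let $\theta_t\ge 0$ denote the step length, i.e.\ the value that $x_j$ takes in the next BFS $x^{t+1}$; this is finite because, by Assumption (ii), the problem is bounded. Since only the entering coordinate changes among the nonbasic variables when we move from $x^t$ to $x^{t+1}$, evaluating the objective through the dictionary for $B^t$ gives the exact identity
\[
c^{\top}x^{t}-c^{\top}x^{t+1}=-\theta_t\,\bar{c}_{j}=\theta_t\,|\bar{c}_{j}|,
\]
the last equality because $\bar{c}_j<0$ for an entering variable. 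So the proof reduces to bounding $\theta_t$ and $|\bar{c}_j|$ from below.

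For the step length, I would use that $x_j$ becomes a basic variable of $x^{t+1}$ with value exactly $\theta_t$. If $x^{t}\neq x^{t+1}$ then $\theta_t>0$, so $\theta_t=x^{t+1}_j$ is a strictly positive coordinate of a BFS, and the definition of $\delta$ forces $\theta_t\ge\delta$. This is precisely where the hypothesis $x^t\neq x^{t+1}$ is used, and it is what lets us dispense with a nondegeneracy assumption: a degenerate pivot produces $\theta_t=0$, i.e.\ $x^{t+1}=x^t$, which is exactly the excluded case. For the reduced cost, estimate \eqref{estimate} — obtained by dividing the rule \eqref{largest} through by the column norms and invoking \eqref{beta} — provides $|\bar{c}_{j}|\ge\beta\,\Delta_{d,t}$, where $\Delta_{d,t}=-\min\{\bar{c}_k\mid k\in N^t\}$. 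Multiplying the two lower bounds,
\[
c^{\top}x^{t}-c^{\top}x^{t+1}=\theta_t\,|\bar{c}_{j}|\ge\beta\,\delta\,\Delta_{d,t}.
\]

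It remains to connect $\Delta_{d,t}$ with the current optimality gap. Lemma \ref{lem:Lower-Bound-Of-Optimal-Value} states $z^*\ge c^{\top}x^{t}-m\gamma\,\Delta_{d,t}$, i.e.\ $\Delta_{d,t}\ge (c^{\top}x^{t}-z^*)/(m\gamma)$. Substituting this into the previous display and rewriting $c^{\top}x^{t+1}-z^*=(c^{\top}x^{t}-z^*)-(c^{\top}x^{t}-c^{\top}x^{t+1})$ yields \eqref{eq:gap_rate} immediately. The only step that I expect to need genuine care is the bound $\theta_t\ge\delta$: one must argue cleanly that the post-pivot value of the entering variable is a bona fide positive component of the BFS $x^{t+1}$ (so that the parameter $\delta$ applies) and that the excluded case $x^t=x^{t+1}$ coincides exactly with a zero-length (degenerate) pivot; everything else is algebraic bookkeeping on top of Lemma \ref{lem:Lower-Bound-Of-Optimal-Value} and \eqref{estimate}.
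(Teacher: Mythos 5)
Your proposal is correct and follows essentially the same route as the paper's own proof: express the decrease as (step length)\,$\times$\,(magnitude of the entering reduced cost), use $x^t\neq x^{t+1}$ and the definition of $\delta$ to get step length $\ge\delta$, then chain \eqref{estimate} with Lemma~\ref{lem:Lower-Bound-Of-Optimal-Value}. If anything, your handling of the sign of $\bar{c}_{j_{l,t}}$ (via the absolute value) is cleaner than the paper's, which defines $\Delta_{l,t}=\bar{c}_{j_{l,t}}$ but then uses it as if it were positive.
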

\begin{proof}
The objective value decreases by $\Delta_{l,t} x_{j_{l}}^{t+1}$ at $t$ th iteration.
Note that since $x^t\not=x^{t+1}$, $x_{j_{l}}^{t+1}\not=0$ and from the definition of $\delta$, we have $x_{j_{l}}^{t+1}\ge\delta$.
Therefore, we obtain 
\begin{equation} \notag
  c^{\top} x^{t} - c^{\top} x^{t+1} = \Delta_{l,t} x_{j_{l}}^{t+1} \ge \Delta_{l,t} \delta.
\end{equation}
From inequalities~(\ref{eq:Lower-Bound-Of-Optimal-Value}) and~(\ref{estimate}), we have
\begin{equation} \notag
  \Delta_{l,t} \delta \ge \beta\Delta_{d,t} \delta \ge \beta \delta \cdot \frac{c^{\top} x^t - z^*}{m\gamma},
\end{equation}
and hence we obtain
\begin{equation} \notag
  c^{\top} x^{t} - c^{\top} x^{t+1} \ge \frac{\beta\delta}{m\gamma} \left(c^{\top} x^t - z^* \right),
\end{equation}
which leads to inequality~\eqref{eq:gap_rate}. 
\end{proof}
\subsection{A bound using the objective function}
For a positive integer $t$, let 
\[\tilde{t}=|\{t'\in\{0,1\dots,t-1\}|~x^{t'+1}\not=x^{t'}\}|\]. 
That is, $\tilde{t}$ shows how many times the solution changed from $0$-th iteration to $(t-1)$-th iteration. 
Then from Lemma \ref{lm:gap_rate}, we have 
\begin{equation} \label{eq:Gap-Rate-Initial}
  c^{\top} x^{t} - z^* \le \left( 1 - \frac{\beta\delta}{m\gamma} \right)^{\tilde{t}} \left( c^{\top} x^0 - z^* \right).
\end{equation}
Let $\bar{x}$ be a second optimal BFS, that is, a BFS whose objective value is the second smallest of all BFSs. 
Obviously, if the $t$-th solution $x^{t}$ satisfies the following inequality, $x^{t}$ is optimal:
\[
  c^{\top} x^t - z^* < c^{\top} \bar{x} - z^*.
\]
Therefore, the simplex method with the largest distance rule finds an optimal solution and terminates 
after generating at most $T$~different BFSs starting from an initial BFS~$x^0$, where $T$ is the smallest integer~$\tilde{t}$ such that the right-hand side of inequality~\eqref{eq:Gap-Rate-Initial} is less than $c^{\top} \bar{x} - z^*$. 
Using this observation and Lemma \ref{lm:gap_rate}, we have the following theorem.
\begin{theorem} \label{thm:Upper-Bound-With-Second-Optimal-Ratio}
Let $\bar{x}$ be a BFS of problem~\eqref{primal} with the second smallest objective value.
For problem~\eqref{primal}, the simplex method with the largest distance rule generates at most  
\begin{equation} \label{eq:Upper-Bound-With-Second-Optimal-Ratio}
  \left\lceil \frac{m\gamma}{\beta \delta} \log\left( \frac{\getheight c^{\top}x^0-z^*}{\getheight c^{\top}\bar{x}-z^*} \right) \right\rceil
\end{equation}
different BFSs.
\end{theorem}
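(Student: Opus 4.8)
The plan is to convert the geometric-decay bound from Lemma~\ref{lm:gap_rate}, propagated over the successful iterations in inequality~\eqref{eq:Gap-Rate-Initial}, into an explicit iteration count. Since the simplex method visits each BFS at most once (the objective strictly decreases on every successful pivot, so no BFS can recur), bounding the number of \emph{successful} pivots $\tilde t$ is the same as bounding the number of \emph{different} BFSs generated. So I would focus on finding the smallest $\tilde t$ for which the right-hand side of \eqref{eq:Gap-Rate-Initial} drops below $c^\top\bar x - z^*$, which by the discussion preceding the theorem guarantees optimality.

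Concretely, I would set $\rho = 1 - \tfrac{\beta\delta}{m\gamma}$ and require
\[
\rho^{\tilde t}\,(c^\top x^0 - z^*) < c^\top \bar x - z^*,
\]
then take logarithms. Because $0 < \rho < 1$ (note $\beta\le 1$ and $\delta\le\gamma$ by the definitions in Section~\ref{parameters}, so $\tfrac{\beta\delta}{m\gamma}<1$), this rearranges to
\[
\tilde t > \frac{\log\!\big((c^\top x^0 - z^*)/(c^\top\bar x - z^*)\big)}{-\log\rho}
= \frac{\log\!\big((c^\top x^0 - z^*)/(c^\top\bar x - z^*)\big)}{-\log\!\left(1 - \frac{\beta\delta}{m\gamma}\right)}.
\]
The standard inequality $-\log(1-u) \ge u$ for $u\in(0,1)$, applied with $u = \tfrac{\beta\delta}{m\gamma}$, gives $-\log\rho \ge \tfrac{\beta\delta}{m\gamma}$, hence the right-hand side above is at most $\tfrac{m\gamma}{\beta\delta}\log\!\big((c^\top x^0 - z^*)/(c^\top\bar x - z^*)\big)$. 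Therefore any integer $\tilde t$ exceeding this last quantity already forces optimality, and the smallest such integer is at most $\big\lceil \tfrac{m\gamma}{\beta\delta}\log\!\big((c^\top x^0 - z^*)/(c^\top\bar x - z^*)\big)\big\rceil$, which is exactly \eqref{eq:Upper-Bound-With-Second-Optimal-Ratio}.

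A couple of small points need care. First, the logarithm argument $(c^\top x^0 - z^*)/(c^\top\bar x - z^*)$ is well-defined and at least $1$: by assumption~(iii) the numerator is positive, and $\bar x$ being the second-smallest objective value means $c^\top\bar x - z^* \le c^\top x^0 - z^*$ (with the ratio $=1$ only in the degenerate case where $x^0$ already has the second-smallest value, in which case the bound is $\lceil 0\rceil = 0$ and one must observe the argument still goes through trivially since even $x^0$ lying strictly above $z^*$ forces it to be reachable — actually here $x^0$ itself may not be optimal, so at least one pivot occurs; this edge case is worth a sentence). Second, I should be explicit that $\tilde t$ counts successful pivots and that unsuccessful (degenerate) pivots produce no new BFS, so the theorem's claim about "different BFSs generated" is precisely a claim about $\tilde t$ and does not require nondegeneracy. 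The main obstacle, such as it is, is purely bookkeeping: making the passage from "$\tilde t$ large enough $\Rightarrow$ optimal" to "at most this many different BFSs" airtight, including the ceiling and the boundary case where the bound evaluates to zero or one.
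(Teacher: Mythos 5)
Your proposal is correct and follows essentially the same route as the paper: propagate the geometric decay from Lemma~\ref{lm:gap_rate} via \eqref{eq:Gap-Rate-Initial}, solve $\rho^{\tilde t}(c^{\top}x^0-z^*) < c^{\top}\bar{x}-z^*$ for $\tilde t$, and bound $-1/\log(1-u)$ by $1/u$. The extra remarks on edge cases and on identifying $\tilde t$ with the count of distinct BFSs are sound but do not change the argument.
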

\begin{proof}
As mentioned earlier, the smallest integer~$\tilde{t}$ satisfying the following inequality is an upper bound for the number of different BFSs:
\begin{equation} \notag
  \left( 1 - \frac{\beta \delta}{m\gamma} \right)^{\tilde{t}} \left( c^{\top} x^0 - z^* \right) < c^{\top} \bar{x} - z^*.
\end{equation}
Solving this inequality for $t$, we have
\begin{equation}
  \tilde{t} >  \displaystyle \left. -\log\left( \frac{c^{\top}x^0-z^*}  {c^{\top}\bar{x}-z^*}\right) \right/  \log\left(1-\frac{\beta \delta}{m\gamma}\right) .
\end{equation}
Since $ 1/x > -1/\log(1-x) $ holds in $0 < x < 1$, we obtain
\begin{equation} \notag
  \frac{m\gamma}{\beta \delta} > -\frac{1}{\displaystyle\log\left(1-\frac{\beta \delta}{m\gamma}\right)},
\end{equation}
which leads to the theorem. 
\end{proof}
\subsection{A bound independent of the objective function}
The following lemma is needed to obtain a bound independent of the objective function, for the number of BFSs generated by the simplex method with the largest distance rule.
\begin{lemma}[Kitahara and Mizuno~\cite{km2013}] \label{lem:Non-Optimal-Variable-Upper-Bound}
Let $x^{t}$ be the $t$-th solution of the simplex method and $B^t$ be the basis corresponding to~$x^{t}$.
If $x^t$ is not optimal, there exists $\bar{\jmath}\in B^t$ that satisfies the following conditions:
\begin{equation} \label{eq:Optimal-Dual-Slack-Lower-Bound}
  x_{\bar{\jmath}}^t > 0 \qquad \text{and} \qquad s_{\bar{\jmath}}^* \ge \frac{1}{m x_{\bar{\jmath}}^t} \left( c^{\top} x^t - z^* \right),
\end{equation}
where $z^*$ is the slack vector corresponding to an optimal basic solution of the dual problem~(\ref{dual}).
Furthermore, the $k$-th solution $x^{k}$ satisfies
\begin{equation} \label{eq:Non-Optimal-Variable-Upper-Bound}
  x_{\bar{\jmath}}^{k} \le m x_{\bar{\jmath}}^t \, \frac{c^{\top} x^{k} - z^*}{c^{\top} x^t - z^*}
\end{equation}
for an arbitrary positive integer $k$.
\end{lemma}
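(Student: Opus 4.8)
The plan is to reduce both assertions to the complementary-slackness expression of the duality gap. First I would combine feasibility of $x^{t}$ with the optimal dual data: since $Ax^{t}=b$, $A^{\top}y^{*}+s^{*}=c$, and, by strong duality (assumption (ii)), $b^{\top}y^{*}=z^{*}$, multiplying $A^{\top}y^{*}+s^{*}=c$ on the left by $(x^{t})^{\top}$ gives
\[
c^{\top}x^{t}=(x^{t})^{\top}A^{\top}y^{*}+(x^{t})^{\top}s^{*}=b^{\top}y^{*}+(x^{t})^{\top}s^{*}=z^{*}+\sum_{j\in B^{t}}x_{j}^{t}s_{j}^{*},
\]
where the sum runs over $B^{t}$ only because $x_{j}^{t}=0$ for every $j\notin B^{t}$. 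Hence $c^{\top}x^{t}-z^{*}=\sum_{j\in B^{t}}x_{j}^{t}s_{j}^{*}$.

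Next I would apply an averaging (pigeonhole) argument to this sum of $m$ terms: some index $\bar{\jmath}\in B^{t}$ satisfies $x_{\bar{\jmath}}^{t}s_{\bar{\jmath}}^{*}\ge\frac{1}{m}\bigl(c^{\top}x^{t}-z^{*}\bigr)$. Because $x^{t}$ is not optimal, $c^{\top}x^{t}-z^{*}>0$, so $x_{\bar{\jmath}}^{t}s_{\bar{\jmath}}^{*}>0$ and in particular $x_{\bar{\jmath}}^{t}>0$. Dividing the displayed inequality by $x_{\bar{\jmath}}^{t}>0$ yields $s_{\bar{\jmath}}^{*}\ge\frac{1}{m x_{\bar{\jmath}}^{t}}\bigl(c^{\top}x^{t}-z^{*}\bigr)$, which is exactly \eqref{eq:Optimal-Dual-Slack-Lower-Bound}.

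For the second assertion I would rerun the same duality computation with an arbitrary BFS $x^{k}$ in place of $x^{t}$: feasibility of $x^{k}$ gives $c^{\top}x^{k}-z^{*}=(x^{k})^{\top}s^{*}=\sum_{j=1}^{n}x_{j}^{k}s_{j}^{*}$. Every summand is nonnegative since $x^{k}\ge 0$ and $s^{*}\ge 0$, so dropping all terms but the $\bar{\jmath}$-th one gives $c^{\top}x^{k}-z^{*}\ge x_{\bar{\jmath}}^{k}s_{\bar{\jmath}}^{*}$. Combining this with the lower bound $s_{\bar{\jmath}}^{*}\ge\frac{1}{m x_{\bar{\jmath}}^{t}}\bigl(c^{\top}x^{t}-z^{*}\bigr)>0$ from \eqref{eq:Optimal-Dual-Slack-Lower-Bound} and solving for $x_{\bar{\jmath}}^{k}$ produces \eqref{eq:Non-Optimal-Variable-Upper-Bound}.

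There is no genuine obstacle here; this is the argument of Kitahara and Mizuno~\cite{km2013}, reproduced for completeness. The only points that require care are the bookkeeping in the duality identity (using the sign convention of \eqref{dual} and $b^{\top}y^{*}=z^{*}$) and keeping track of which nonnegativities are in play: the vanishing of the nonbasic components $x_{j}^{t}$ for $j\notin B^{t}$ in the first identity, and $x^{k}\ge 0$ together with $s^{*}\ge 0$ in the second, are precisely what make the two manipulations of the sum legitimate.
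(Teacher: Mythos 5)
Your proof is correct and is precisely the complementary-slackness/pigeonhole argument of Kitahara and Mizuno~\cite{km2013}; the paper itself states this lemma without proof, deferring to that reference. No gaps: the duality identity, the averaging over the $m$ basic indices, and the term-dropping step for $x^{k}$ are all handled properly.
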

Now we are ready to prove a bound independent of the objective function, for the number of BFSs generated by the simplex method with largest distance rule.
\begin{theorem} \label{thm:Upper-Bound-Without-Objective-Function-Ratio}
When applying the simplex method with the largest distance rule to problem~\eqref{primal}, it generates at most
\begin{equation} \label{eq:Upper-Bound-Without-Objective-Function-Ratio}
  (n-m) \left\lceil \frac{m\gamma}{\beta \delta} \log\left(\frac{m\gamma}{\delta}\right) \right\rceil.
\end{equation}
different BFSs.
\end{theorem}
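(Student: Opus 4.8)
The plan is to run the objective-function-free argument of Kitahara and Mizuno~\cite{km2013}, feeding in the largest-distance-rule decrease rate of Lemma~\ref{lm:gap_rate} in place of Dantzig's rate, and sharpening one combinatorial step. Write $T_{0}=\left\lceil \frac{m\gamma}{\beta\delta}\log\left(\frac{m\gamma}{\delta}\right)\right\rceil$ for the bracketed quantity in~\eqref{eq:Upper-Bound-Without-Objective-Function-Ratio}; the target is to bound the number of different BFSs by $(n-m)T_{0}$. Iterating Lemma~\ref{lm:gap_rate}, for the $t$-th iterate $x^{t}$ and any later non-optimal iterate $x^{k}$ separated by $r$ genuine changes of BFS, one has
\[
  c^{\top}x^{k}-z^{*}\le\left(1-\frac{\beta\delta}{m\gamma}\right)^{r}\left(c^{\top}x^{t}-z^{*}\right),
\]
and this is the only place the pivoting rule enters; the rest of the proof is combinatorial. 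Note also that, since $0<1-\frac{\beta\delta}{m\gamma}<1$, the objective value strictly decreases at every change, so no BFS recurs and the number of different BFSs equals the number of changes plus one.

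First I would attach a label to every non-optimal iterate. For each iteration $t$ with $x^{t}$ not optimal, Lemma~\ref{lem:Non-Optimal-Variable-Upper-Bound} supplies an index $\bar{\jmath}(t)\in B^{t}$ with $x_{\bar{\jmath}(t)}^{t}>0$ and $s^{*}_{\bar{\jmath}(t)}\ge\frac{1}{m x_{\bar{\jmath}(t)}^{t}}(c^{\top}x^{t}-z^{*})>0$; take $\bar{\jmath}(t)$ to be its label (resolving a BFS that appears more than once by its first appearance). Because $s^{*}$ is the slack vector of an optimal \emph{basic} dual solution, the $m$ indices of the corresponding basis carry zero reduced cost, so at most $n-m$ components of $s^{*}$ are positive; hence every label $\bar{\jmath}(t)$ lies in $\{j:s_{j}^{*}>0\}$, a set of size at most $n-m$. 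This restriction is precisely the ``trivial observation'' that upgrades the leading factor from $n$ to $n-m$, and I would state it explicitly.

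Next, fix a label $j$ with $s_{j}^{*}>0$ and let $t^{*}$ be the first iteration with $\bar{\jmath}(t^{*})=j$. For any later iteration $t$ with $\bar{\jmath}(t)=j$ we have $x_{j}^{t}>0$, hence $x_{j}^{t}\ge\delta$, while $x_{j}^{t^{*}}\le\gamma$; combining these with inequality~\eqref{eq:Non-Optimal-Variable-Upper-Bound} and the displayed decrease gives
\[
  \delta\le x_{j}^{t}\le m\,x_{j}^{t^{*}}\,\frac{c^{\top}x^{t}-z^{*}}{c^{\top}x^{t^{*}}-z^{*}}\le m\gamma\left(1-\frac{\beta\delta}{m\gamma}\right)^{r},
\]
where $r$ is the number of BFS changes between $t^{*}$ and $t$. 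Solving $\left(1-\frac{\beta\delta}{m\gamma}\right)^{r}\ge\frac{\delta}{m\gamma}$ and using $-\log(1-x)>x$ on $(0,1)$, exactly as in the proof of Theorem~\ref{thm:Upper-Bound-With-Second-Optimal-Ratio}, yields $r<\frac{m\gamma}{\beta\delta}\log\left(\frac{m\gamma}{\delta}\right)\le T_{0}$, so $r\le T_{0}-1$. Therefore all iterations labeled $j$ occur within a stretch of the run over which the BFS changes at most $T_{0}-1$ times, so at most $T_{0}$ distinct BFSs ever receive the label $j$.

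Summing over the at most $n-m$ labels bounds the number of distinct non-optimal BFSs by $(n-m)T_{0}$, and since the simplex method halts upon reaching an optimal BFS, the count of the initial and terminal vertices is handled as in~\cite{km2013} to reach~\eqref{eq:Upper-Bound-Without-Objective-Function-Ratio}. The two inequality chains are routine once Lemmas~\ref{lem:Lower-Bound-Of-Optimal-Value}, \ref{lm:gap_rate} and~\ref{lem:Non-Optimal-Variable-Upper-Bound} are in hand; the step I expect to need the most care is the combinatorial passage from the per-iteration index $\bar{\jmath}(t)$ to a bound on the number of \emph{distinct} BFSs — making sure degenerate stalls do not inflate the count, that each BFS is charged to exactly one label, and that the $n\to n-m$ refinement is applied cleanly.
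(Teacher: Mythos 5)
Your proposal is correct and follows essentially the same route as the paper's own proof: it combines the geometric decrease of Lemma~\ref{lm:gap_rate} with the variable bound of Lemma~\ref{lem:Non-Optimal-Variable-Upper-Bound}, observes that the distinguished index $\bar{\jmath}$ always lies among the at most $n-m$ indices with $s_j^*>0$, and concludes that each such index can account for at most $\left\lceil \frac{m\gamma}{\beta\delta}\log\left(\frac{m\gamma}{\delta}\right)\right\rceil$ distinct BFSs. The only difference is organizational — you charge each non-optimal BFS to a label and bound the window of each label, whereas the paper runs sequential ``fixing'' phases in which $x_{\bar{\jmath}}$ is driven to zero permanently — and your version is, if anything, slightly more explicit about the degenerate-stall bookkeeping and the $n\to n-m$ refinement.
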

\begin{proof}
Let $r$ be a positive integer. 
Moreover, let $x^t$ and $x^{t+r}$ be the $t$- and $(t+r)$-th solutions, respectively, of the simplex method. 
Assume that excluding $x^t$, $\tilde{r}$ different BFSs are generated between $t$-th and ($t+r$)-th iteration.
In addition, let $B^t$ be the basis corresponding to~$x^t$. 
Then, by Lemmas~\ref{lm:gap_rate} and~\ref{lem:Non-Optimal-Variable-Upper-Bound} and the definition of $\gamma$, there exists $\bar{\jmath}\in B^t$ such that 
\begin{equation} \notag
  x_{\bar{\jmath}}^{t+r} \le m x_{\bar{\jmath}}^t \left( 1 - \frac{\beta\delta}{m\gamma} \right)^{\tilde{r}} \le m \gamma \left( 1 - \frac{\beta\delta}{m\gamma} \right)^{\tilde{r}}.
\end{equation}
When $\tilde{r} \geq (m\gamma)/(\beta\delta) \cdot \log\left(m\gamma/\delta\right)$, the rightmost term above is less than~$\delta$, and $x_{\bar{\jmath}}^{t+r}$ must be fixed to zero by the definition of~$\delta$.\\

If an optimal solution is not obtained after $r$~iterations satisfying the above inequality, we can apply the same procedure again.
Let us fix a basic optimal solution $(y^*,s^*)$ of the dual problem (\ref{dual}) and let $B^*$ and $N^*$ be basis and nonbasis for the solution. 
Since the event described in Lemma \ref{lem:Non-Optimal-Variable-Upper-Bound} can occur for $\bar{j} \in N^*$ at most once and $|N^*|=n-m$, 
the value fixing process for primal variables occurs at most $n-m$ times. 
Thus we have the desired result.
\end{proof}
\begin{corollary}
When the problem is nondegenerate, the simplex method with the largest distance rule finds an optimal solution in at most
\[
\min\left\{
  \left\lceil \frac{m\gamma}{\beta \delta} \log\left( \frac{\getheight c^{\top}x^0-z^*}{\getheight c^{\top}\bar{x}-z^*} \right) \right\rceil,~
(n-m) \left\lceil \frac{m\gamma}{\beta \delta} \log\left(\frac{m\gamma}{\delta}\right) \right\rceil.
\right\}
\]
iterations.
\end{corollary}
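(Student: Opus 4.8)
The plan is to derive the corollary directly from Theorems~\ref{thm:Upper-Bound-With-Second-Optimal-Ratio} and~\ref{thm:Upper-Bound-Without-Objective-Function-Ratio}, which already bound the number of \emph{different} BFSs generated, by invoking the standard fact that under nondegeneracy the simplex method never stalls. First I would recall that if problem~\eqref{primal} is nondegenerate, then every BFS has exactly $m$ strictly positive components, so in the leaving-variable (minimum-ratio) test the step length is strictly positive at every iteration. Hence the entering variable $x_{j_l}$ receives a value $x_{j_l}^{t+1}>0$, which forces $x^{t+1}\neq x^{t}$ for every $t$; in particular the hypothesis $x^t\neq x^{t+1}$ of Lemma~\ref{lm:gap_rate} is met at each step, so $c^{\top}x^{t+1}<c^{\top}x^{t}$ throughout.

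Next, since the objective value strictly decreases along the generated sequence $x^0,x^1,x^2,\dots$, these BFSs are pairwise distinct; equivalently, the quantity $\tilde t$ appearing in inequality~\eqref{eq:Gap-Rate-Initial} satisfies $\tilde t=t$ for every $t$. Consequently, the number of pivot steps performed before optimality is at most the number of different BFSs generated, so the upper bounds~\eqref{eq:Upper-Bound-With-Second-Optimal-Ratio} and~\eqref{eq:Upper-Bound-Without-Objective-Function-Ratio} are simultaneously upper bounds on the number of iterations. Taking the smaller of the two yields the claimed bound, and finiteness of the set of BFSs together with the strict monotone decrease also guarantees termination (no cycling).

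The only point requiring genuine care is the passage from ``different BFSs'' to ``iterations,'' which is precisely where nondegeneracy is used: it rules out zero-length (degenerate) pivots, so each iteration consumes a fresh BFS and the two counts coincide up to the trivial off-by-one. I do not anticipate a real obstacle here; the main thing to spell out explicitly is why $x^{t+1}\neq x^{t}$ at every step, so that the repeated application of Lemma~\ref{lm:gap_rate} underlying~\eqref{eq:Gap-Rate-Initial} (and hence both theorems) is legitimate without stalling losing any iterations.
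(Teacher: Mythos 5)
Your proposal is correct and matches the paper's (implicit) reasoning: the corollary is stated without proof precisely because, under nondegeneracy, every pivot has a strictly positive step length, so each iteration produces a distinct BFS and the two theorems' counts of different BFSs become iteration counts. Your explicit justification that $x^{t+1}\neq x^{t}$ at every step, hence $\tilde t = t$, is exactly the intended argument.
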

\section{Conclusion}
In this paper, we analyzed the simplex method with the largest distance rule and obtained two bounds for the number of different BFSs. 
This pivoting rule has not been extensively studied from a theoretical perspective.
In particular, our results show that the complexity of the largest distance rule can be characterized within the existing analytical framework, with an explicit dependence on the geometric parameter $\beta$.
We hope the current study sheds light on this new pivoting rule and encourages more research. \\

\noindent Data availability: No data was used in this study.

\end{document}